\documentclass[11pt]{amsart}
\usepackage{amssymb,amsmath,cite,hyperref}
\usepackage[usenames]{color} \newtheorem{theorem}{Theorem}[section]
\newtheorem{proposition}[theorem]{Proposition}
\newtheorem{lemma}[theorem]{Lemma}

\newtheorem{remark}[theorem]{Remark}
\newtheorem{problem}[theorem]{Problem}
\newtheorem{question}[theorem]{Question}
\newtheorem{counterexample}[theorem]{Counterexample}

\DeclareMathOperator{\link}{Link}

\title{On vanishing patterns in $j$-strands of edge ideals}
\author{Abed Abedelfatah and Eran Nevo}
\address{Einstein Institute of Mathematics, The Hebrew University of Jerusalem, Jerusalem, Israel}
\email{abedelfatah@gmail.com}
\address{Einstein Institute of Mathematics, The Hebrew University of Jerusalem, Jerusalem, Israel}
\email{nevo@math.huji.ac.il}
\thanks{Research of both authors was partially supported by Israel Science Foundation grants ISF-805/11 and ISF-1695/15.
}
\keywords{Betti diagram, simplicial complex, edge ideal, monomial ideal, subadditivity}
\begin{document}
\maketitle
\begin{abstract}
We consider two problems regarding vanishing patterns in the Betti table of edge ideals $I$ in polynomial algebra $S$. First, we show that the $j$-strand is connected if $j=3$ (for $j=2$ this is easy and known), and give examples where the $j$-strand is not connected for any $j>3$.
Next, we apply our result on strand connectivity to establish the subadditivity conjecture for edge ideals, $t_{a+b}\leq t_a+t_b$, in case $b=2,3$ (the case $b=1$ is known). Here $t_i$ stands for the maximal shifts in the minimal free $S$-resolution of $S/I$.
\end{abstract}
\section{Introduction}

Let $S=K[x_1,\ldots,x_n]$ denote the polynomial ring with $n$ variables over the fixed field $K$, graded by setting $\deg(x_i)=1$ for each variable.
Since Hilbert's syzygy theorem, minimal free resolutions of graded finitely generated $S$-modules, and particularly their graded Betti numbers, became central invariants of study in Commutative Algebra, with applications in other areas, e.g. in algebraic geometry, hyperplane arrangements, and combinatorics.
Many important invariants of such $S$-modules are determined just by the \emph{vanishing pattern} of the graded Betti numbers, namely which ones are zero and which are nonzero, e.g. the regularity, projective dimension etc.
Restricting to $S$-modules $S/I$ for monomial ideals $I$, and particularly to edge ideals, makes combinatorial, and particularly graph theoretical, tools available. This perspective proved to be very fruitful in recent decades; see the recent textbooks \cite{Herzog-Hibi, Peeva} for more background and references.

In this paper we consider the following two problems on vanishing patterns in the Betti table of edge ideals; we completely resolve the first and use it to partially resolve the second.

(I)
In a private communication, Aldo Conca asked us the following question, based on computer experiments; see also Whieldon [\citen{WG-jump},Question 7.1(2)]:

\begin{question}\label{q:Aldo}
Is the Betti diagram of any monomial ideal $I$ generated in degree 2, for any $j\geq 2$, $j$-strand connected? I.e. if $j\geq2$, $\beta_{i,j}(I)$ and $\beta_{i+k,j+k}(I)$ are both non-zero, where $k>0$ and $i\geq 0$, then $\beta_{i+m,j+m}(I)\neq0$ for all $0\leq m\leq k$.
\end{question}
The answer to this question, over any field, is ``Yes'' when $j=2$.
This follow from the fact that if $\cdots\rightarrow F_1\rightarrow F_0\rightarrow I\rightarrow 0$ is the minimal graded free resolution of $I$ and $r$ is the minimum degree of the generators of $F_i$, then the minimum degree of the generators of $F_{i+1}$ is at least $r+1$. So if $(\beta_{0,2}(I),\beta_{1,3}(I),\dots)$ is the first strand and $\beta_{i,i+2}(I)=0$ for some $i\geq0$, then $\beta_{t,t+2}(I)=0$ for all $t\geq i$.

We show that the answer is ``Yes" for $j=3$ and ``No" for any $j>3$:
\begin{theorem}\label{thm:I}
Over any field, any monomial ideal generated in degree 2 is $j$-strand connected for $j=3$ or $2$.

For any $j>3$, there is a monomial ideal generated in degree 2 which is not $j$-strand connected, over any field.
\end{theorem}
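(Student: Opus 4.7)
The plan is to treat the two halves separately, linked by a common reduction. Since polarization preserves graded Betti numbers and sends monomial ideals generated in degree $2$ to edge ideals, one may assume $I=I(G)$ for a graph $G$. Hochster's formula then yields
\[
\beta_{i,\,i+j}(I)=\sum_{W\subseteq V(G),\,|W|=i+j}\dim_K \tilde H_{j-2}(\mathrm{Ind}(G[W]);K),
\]
where $\mathrm{Ind}(H)$ is the independence complex of $H$, so $j$-strand connectivity of $I$ is equivalent to the set $\mathcal H_{j-2}(G):=\{\,|W|:\tilde H_{j-2}(\mathrm{Ind}(G[W]);K)\neq 0\,\}$ being an interval in $\mathbb Z$.

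For the positive half ($j=3$), the target is a vertex-removal lemma: if $\tilde H_1(\mathrm{Ind}(G[W]))\neq 0$ and $|W|>\min \mathcal H_1(G)$, then some $v\in W$ satisfies $\tilde H_1(\mathrm{Ind}(G[W\setminus\{v\}]))\neq 0$; iterating from any element of $\mathcal H_1(G)$ down to its minimum then fills in every intermediate value. The tool is the standard decomposition $\mathrm{Ind}(H)=\mathrm{Ind}(H-v)\cup(v\ast \mathrm{Ind}(H-N[v]))$ applied to $H=G[W]$ (with $N[v]$ the closed neighborhood of $v$), which via Mayer--Vietoris gives
\[
\tilde H_1(\mathrm{Ind}(G[W-N[v]]))\to \tilde H_1(\mathrm{Ind}(G[W-v]))\to \tilde H_1(\mathrm{Ind}(G[W]))\to \tilde H_0(\mathrm{Ind}(G[W-N[v]])).
\]
If $\tilde H_1$ vanished on every $\mathrm{Ind}(G[W-v])$, the connecting map would embed $\tilde H_1(\mathrm{Ind}(G[W]))$ into $\tilde H_0(\mathrm{Ind}(G[W-N[v]]))$ for each $v\in W$; equivalently, the complement graph of $G[W-N[v]]$ would be disconnected for every $v$. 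The main obstacle is turning this uniform disconnectedness into a contradiction. I expect to do so by picking a $1$-cycle representative $\gamma$ of a nonzero class in $\tilde H_1(\mathrm{Ind}(G[W]))$, bounding its vertex support using the existence of a strictly smaller hit in $\mathcal H_1(G)$, and selecting $v\in W$ off the support of $\gamma$ so that $\gamma$ persists nontrivially in $\tilde H_1(\mathrm{Ind}(G[W-v]))$.

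For the counterexample half ($j>3$), I would construct an explicit family $\{G_j\}_{j>3}$ with a gap in $\mathcal H_{j-2}(G_j)$. The main tools are the join identity $\mathrm{Ind}(G_1\sqcup G_2)=\mathrm{Ind}(G_1)\ast \mathrm{Ind}(G_2)$ together with $\tilde H_{p+q+1}(A\ast B)\cong\bigoplus_{r+s=p+q}\tilde H_r(A)\otimes \tilde H_s(B)$, and the observation that joining with a contractible piece (such as an isolated vertex, whose $\mathrm{Ind}$ is a point) annihilates all reduced homology. A natural route is to first build $G_4$ as a disjoint union of graphs whose independence complex is $S^2$ (for instance cycles $C_n$ with $n\in\{8,10\}$), padded so that two extremal cardinalities in $\mathcal H_2(G_4)$ are realized while every intermediate-size induced subgraph acquires an isolated vertex from some piece and hence has contractible $\mathrm{Ind}$; for $j\ge 5$, let $G_j$ be the disjoint union of $G_4$ with a fixed small graph whose $\mathrm{Ind}$ is $S^{j-5}$, which shifts all existing hits upward by $j-4$ without introducing new ones. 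The hard part is the finite case check that every intermediate-size induced subgraph really has vanishing $\tilde H_{j-2}$ in each construction.
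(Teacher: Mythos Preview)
Your plan for $j=3$ has a genuine gap: the vertex-removal lemma you aim for is \emph{false}. Take $\Delta=\mathrm{Ind}(G)$ to be the disjoint union of a $6$-cycle on $W=\{1,\dots,6\}$ and a $4$-cycle on $\{7,8,9,10\}$ (this is flag, with $G$ the complement of its $1$-skeleton). Then $\tilde H_1(\Delta[W])\neq 0$ and $\min\mathcal H_1(G)=4<6=|W|$, yet for every $v\in W$ the complex $\Delta[W\setminus\{v\}]$ is a path, so $\tilde H_1=0$. The strand is still connected here, because $W'=\{7,8,9,10,1\}$ (the $4$-cycle plus an isolated point) has $\tilde H_1\neq 0$ and $|W'|=5$; but this witness is \emph{not} obtained by deleting a vertex from $W$. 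Your heuristic ``bound the support of $\gamma$ using a smaller hit in $\mathcal H_1(G)$'' cannot work either: under the hypothesis that $\tilde H_1(\Delta[W\setminus\{v\}])=0$ for all $v\in W$, one shows (exactly as in the paper's Step~1) that $\Delta[W]$ is an \emph{induced} cycle, so every nontrivial $1$-cycle in $\Delta[W]$ is supported on all of $W$ and there is no vertex off the support to delete. The paper's proof confronts precisely this difficulty: it fixes the induced cycle $C=\Delta[W]$, takes a smaller induced cycle $C''=\Delta[W'']$ witnessing a smaller hit, and then, through a fairly intricate combinatorial argument, manufactures a subset $W'$ of size $|W|-1$ out of two arcs of $C$ together with two carefully chosen vertices $a,b\in W''\setminus W$, arranged so that a suspension-type cycle survives in $\Delta[W']$. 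The essential point is that $W'$ mixes vertices from both $W$ and $W''$; a deletion-from-$W$ approach cannot reach it.

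Your counterexample plan for $j>3$ is also off. In $G_4=C_8\sqcup C_{10}$ the size-$9$ induced subgraph $P_9\subset C_{10}$ has $\mathrm{Ind}(P_9)\simeq S^2$, so $9\in\mathcal H_2(G_4)$ and there is no gap between $8$ and $10$; more generally, deleting vertices from a cycle yields paths, not graphs with isolated vertices, so the ``every intermediate size acquires an isolated vertex'' mechanism fails. The paper's construction is quite different: for $r=j-2\ge 2$ it takes a single flag $r$-sphere $S$, selects $2r+4$ pairwise far-apart vertices $A\subset S$, and glues an octahedral $(r{+}1)$-sphere $O$ onto $A$. Then $\beta_r(\Delta)\neq 0$ (coming from $S$) and $\beta_r(\Delta[A\setminus\{a,b\}])\neq 0$ for antipodal $a,b$ (this is an $r$-octahedron), while a short Mayer--Vietoris check shows $\beta_r(\Delta\setminus\{x\})=0$ for every vertex $x$. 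The gap is produced by a sphere-plus-octahedron gluing rather than by a join of components.
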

Without the assumption on generation in degree $2$, easier examples in Remark \ref{rem} show that the answer is ``No'' for any $j>2$.

By polarization, we can reduce the problem to squarefree monomial ideals. Using Hochster formula, we answer Question \ref{q:Aldo} by topological combinatorics arguments; see Section \ref{sec:Proofs}.

Theorem \ref{thm:I} can be visualized as a \emph{vanishing pattern} on the Betti table of $I$, where in the $(i,j)$ entry we put $X$ if $\beta_{i,i+j}(I)\neq 0$ and $0$ otherwise; call it the \emph{vanishing table} of $I$. Then our result says that if a monomial ideal $I$ is generated in degree $2$ then its vanishing table has no subsequence with internal zeros $(X,0,\ldots,0,X)$ in any of the first two rows.
For other recent results on other vanishing patterns, see e.g.~\cite{Oscar, Torrente-Varbaro}.



%

(II)
Second, we consider the subadditivity problem for edge ideals.
Given a graded ideal $I$ in $S$ let $t_i$ denote the maximal shifts in the minimal graded free $S$-resolution of $S/I$, namely
\[t_i=t_i(S/I):= \max(j:\ \beta_{i,j}(S/I)\neq 0).\]
The subadditivity relation
\[(*) \ \ t_{a+b}\leq t_a + t_b\]
was proved under certain conditions on $I$ or for certain values of $a$ and $b$, e.g. in \cite{Conca-Sub, Eisenbud-Sub, Herzog-Srinivasan, Srinivasan-New}, and is conjectured to hold under other conditions on $I$ for all values $a$ and $b$ \cite[Conjecture 6.4]{Conca-Sub}.
While counterexamples to (*) for general graded ideals are indicated in \cite[Section 6.1]{Conca-Sub}, no counterexamples to (*) are known for monomial ideals.
When $I$ is generated by monomials, Herzog and Srinivasan \cite[Corollary 4]{Herzog-Srinivasan} proved (*) for $b=1$, which was proved earlier for edge ideals in \cite[Theorem 4.1]{Oscar}.

\begin{theorem}\label{thm:II}
For any edge ideal over any field, the subadditivity relation (*) holds for $b=1,2,3$ and any natural number $a$.
\end{theorem}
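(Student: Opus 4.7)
The plan is to leverage the strand-connectivity result of Theorem~\ref{thm:I} to deduce subadditivity for $b = 2, 3$. Setting $\rho_i := t_i - i$, the inequality $t_{a+b} \le t_a + t_b$ becomes $\rho_{a+b} \le \rho_a + \rho_b$. The Herzog--Srinivasan case $b = 1$ yields $\rho_{a+1} \le \rho_a + 1$, hence $\rho_{a+b} \le \rho_a + b$; it remains to improve this by $b - \rho_b$ whenever $\rho_b < b$.

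For $b = 2$, we have $\rho_2 \in \{1, 2\}$, where $\rho_2 = 2$ corresponds to $G$ containing an induced $2$-matching (equivalently $\beta_{2, 4}(S/I) \ne 0$). If $\rho_2 = 2$, iterating $b = 1$ gives the claim. If $\rho_2 = 1$, set $r := \rho_{a+2}$; the target is $r \le \rho_a + 1$. For $r \le 2$, Theorem~\ref{thm:I} tells us row $r$ of the Betti table of $S/I$ is a contiguous interval. The hypothesis $\beta_{2, 4}(S/I) = 0$ forces the row-$2$ nonzero interval to start at column $\ge 3$; combining this with row-$1$ connectivity (whose starting column is $1$ since $\beta_{1, 2}(S/I) \ne 0$) locates nonzero entries $\beta_{a, a+1}(S/I)$ and $\beta_{2, 3}(S/I)$, yielding $\rho_a + \rho_2 \ge 1 + 1 = 2 \ge r$ as needed. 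For $r \ge 3$ one appeals to a structural fact about edge ideals with no induced $2$-matching that rules out this case.

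For $b = 3$ the plan is analogous, splitting on $\rho_3 \in \{1, 2, 3\}$. The case $\rho_3 = 3$ (an induced $3$-matching in $G$) again follows from iterated $b = 1$. For $\rho_3 \le 2$, I stratify by $r := \rho_{a+3}$: for $r \le 2$, Theorem~\ref{thm:I} provides the shifts exactly as in the $b = 2$ case. The case $r = 3$ is the main obstacle, since row $3$ need not be connected (Theorem~\ref{thm:I} gives counterexamples at $j = 4$). I would handle it via Hochster's formula applied to a subset $W \subseteq V(G)$ of size $a + 6$ witnessing $\beta_{a+3, a+6}(S/I) \ne 0$: the independence complex $\text{Ind}(G[W])$ carries a top-dimensional reduced homology class, and a combinatorial decomposition should yield either an induced $3$-matching in $G$ (contradicting $\rho_3 \le 2$) or an explicit splitting producing nonzero $\beta_{a, p}(S/I)$ and $\beta_{3, q}(S/I)$ with $p + q \ge t_{a+3}$. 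This topological analysis of the top homology of $\text{Ind}(G[W])$, leaning on row-$1$ and row-$2$ connectivity for auxiliary control, is the delicate step.
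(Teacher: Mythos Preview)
Your proposal has a genuine gap. For $b=2$ with $\rho_2=1$ (equivalently $t_2=3$, i.e.\ $G$ is $2K_2$-free), you claim a ``structural fact about edge ideals with no induced $2$-matching'' rules out $r=\rho_{a+2}\ge 3$. Read plainly this asserts that $2K_2$-free graphs satisfy $\mathrm{reg}(S/I(G))\le 2$, which is false. Take $G$ to be the complement of the icosahedron graph $H$. In $H$ any two vertices at distance $2$ have exactly two common neighbors and those two are adjacent, so $H$ contains no induced $C_4$; hence $G$ is $2K_2$-free and $t_2=3$. Since $H$ is $K_4$-free, its clique complex is the icosahedral $2$-sphere itself, so $\widetilde{H}_2(\mathrm{Ind}(G))\ne 0$, giving $\beta_{9,12}(S/I)\ne 0$ and $\rho_9=3$. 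Thus for $a=7$ the case $r=3$ genuinely occurs, and strand connectivity---which only governs rows $1$ and $2$---gives no purchase there. (Incidentally, your $r\le 2$ argument does not really need Theorem~\ref{thm:I}: once $t_{a+2}\ne 0$ one has $t_a\ne 0$, hence $\rho_a\ge 1$ for $a\ge 1$, so $\rho_a+\rho_2\ge 2\ge r$ automatically.) For $b=3$ your $r=3$ paragraph is not an argument but a hope (``should yield''), and the same obstruction applies.

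The paper's method is quite different and never tries to cap the regularity. One fixes a Hochster witness $W$ with $|W|=t_{a+b}$ and $\widetilde{H}_{t_{a+b}-(a+b)-1}(\Delta[W])\ne 0$, locates a vertex $v$ of degree $\ge 2$ (respectively $\ge 3$) in $G[W]$---the hypotheses $t_b<2b$ (respectively $t_3=4$, $t_2=3$) together with the Taylor bound force such a vertex to exist---and applies Mayer--Vietoris to the covering $\Delta[W]=(\Delta[W]-v)\cup(\Delta[W]-\{x_1,\dots\})$, the $x_i$ being neighbors of $v$. Each branch of the long exact sequence yields a nonzero $\beta_{a+b-k,\,t_{a+b}-\ell}(S/I)$ with controlled $k,\ell$, and one finishes by induction on $b$. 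Theorem~\ref{thm:I} is invoked only in the single corner $b=3$, $t_3=4$, $t_2=4$, where combined with Lemma~\ref{lem:corner} it forces $t_k\le k+1$ for all $k\ge 3$.
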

The proof of the case $b=3$ uses the connectivity of the $3$-strand, see Theorem \ref{thm:I}.
Topological combinatorics arguments are used here too; see Section \ref{sec:App}.

\section{Preliminaries }
Fix a field $K$.
Let $S=K[x_1,\dots,x_n]$ be the graded polynomial ring with $\deg(x_i)=1$ for all $i$, and $M$ be a graded $S$-module. The integer $\beta_{i,j}^S(M)=\dim_KTor_i^S(M,K)_j$ is called the $(i,j)$\emph{th graded Betti number of} $M$. Note that if $I$ is a graded ideal of $S$, then $\beta_{i+1,j}^S(S/I)=\beta_{i,j}^S(I)$ for all $i,j\geq 0$.

For a simplicial complex $\Delta$ on the vertex set $\Delta_0=[n]=\{1,\dots,n\}$, its \emph{Stanley-Reisner ideal} $I_{\Delta}\subset S$ is the ideal generated by the squarefree monomials $x_F=\prod_{i\in F}x_i$ with $F\notin \Delta$, $F\subset [n]$. A simplicial complex is called \emph{flag} if its Stanley-Reisner ideal is generated by squarefree monomials of degree two. Flag simplicial complexes are closely related to simple graphs. Let $G$ be a simple graph on the set $[n]$ and denote by $E(G)$ the set of its edges. We define the \emph{edge ideal} of $G$ to be the ideal $$I(G)=\langle x_ix_j:~\{i,j\}\in E(G)\rangle\subset S.$$ So if $\Delta$ is a flag simplicial complex and $H$ is the graph of minimal non-faces of $\Delta$, then $I_\Delta=I(H)$.

For $W\subset V$, we write $$\Delta[W]=\{F\in\Delta~:~F\subset W\}$$ for the induced subcomplex of $\Delta$ on $W$. We denote by $\beta_i(\Delta)=\dim_K \widetilde{H}_i(\Delta;K)$ the dimension of the $i$-th reduced homology group of $\Delta$ with coefficients in $K$. Let $F$ be a face of $\Delta$. The \emph{link} of $F$ in $\Delta$ is the
following simplicial complex: $$\link_\Delta F=\{F'~|~F'\cup F\in \Delta~\mathrm{and}~F'\cap F=\emptyset\}.$$
The \emph{Alexander dual complex} of $\Delta$ is $$\Delta^{\vee}=\{[n]\setminus F~|~F\notin \Delta,\ F\subseteq [n]\}.$$
The following result is known as Hochster's formula for graded Betti numbers.

\begin{theorem}[Hochster]
Let $\Delta$ be a simplicial complex on $[n]$. Then
$$\beta_{i,j}(I_{\Delta})=\sum_{W\subset[n],~|W|=j}\beta_{j-i-2}(\Delta[W])$$ for all $i\geq0$ and $j\geq i+1$.
\end{theorem}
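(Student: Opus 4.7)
The plan is to prove the standard Hochster formula and then translate it from $S/I_\Delta$ to $I_\Delta$ using the shift $\beta_{i+1,j}(S/I_\Delta)=\beta_{i,j}(I_\Delta)$ already noted in the Preliminaries. First I would refine everything to the $\mathbb{Z}^n$-grading. The ring $S$ is naturally $\mathbb{Z}^n$-graded, and since $I_\Delta$ is generated by squarefree monomials the module $S/I_\Delta$ is $\mathbb{Z}^n$-graded, so $\mathrm{Tor}_i^S(S/I_\Delta,K)$ inherits a $\mathbb{Z}^n$-grading. The key preliminary observation is that the multigraded Betti number $\beta_{i,a}(S/I_\Delta)$ vanishes unless $a\in\{0,1\}^n$; this follows from the fact that $S/I_\Delta$ admits a squarefree-multigraded minimal free resolution, which in turn follows by running the standard minimal-resolution algorithm inside the category of squarefree-graded modules. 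Consequently
\[\beta_{i,j}(S/I_\Delta)=\sum_{W\subseteq[n],\,|W|=j}\beta_{i,W}(S/I_\Delta),\]
where $\beta_{i,W}:=\beta_{i,\mathbf{1}_W}$, and it suffices to identify each $\beta_{i,W}(S/I_\Delta)$ with $\dim_K\widetilde H_{|W|-i-1}(\Delta[W];K)$.

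Next I would compute $\mathrm{Tor}_i^S(S/I_\Delta,K)$ by resolving $K$ rather than $S/I_\Delta$, using the Koszul complex $\mathcal K_\bullet=\mathcal K_\bullet(x_1,\dots,x_n;S)$. Tensoring with $S/I_\Delta$ over $S$ gives a complex whose $i$-th term is $\bigoplus_{|F|=i}(S/I_\Delta)\cdot e_F$, with differential $\partial(e_F)=\sum_{k\in F}\pm\,x_k\,e_{F\setminus\{k\}}$. Extracting the $\mathbf{1}_W$-graded piece, an element $e_F$ contributes in multidegree $W$ only through monomials $x_{W\setminus F}\cdot e_F$, and this monomial survives in $S/I_\Delta$ iff $W\setminus F\in\Delta$, i.e.\ iff the complementary face (inside $W$) lies in $\Delta[W]$. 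This matches the oriented simplicial chain complex of $\Delta[W]$ under the bijection $F\leftrightarrow W\setminus F$, which sends a basis element in Koszul degree $i$ to a face of dimension $|W|-i-1$ in $\Delta[W]$. After checking that the signs agree (up to a global reindexing) one obtains
\[\mathrm{Tor}_i^S(S/I_\Delta,K)_{\mathbf{1}_W}\;\cong\;\widetilde H_{|W|-i-1}(\Delta[W];K).\]

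The main obstacle, and the only step that needs care, is precisely this identification of the multigraded Koszul-tensor-$S/I_\Delta$ complex with the reduced simplicial chain complex of $\Delta[W]$: one must set up the bijection $F\mapsto W\setminus F$, verify that the induced differential matches the simplicial boundary (including the boundary cases $F=\emptyset$ and $F=W$ corresponding to the augmentation and the top face), and confirm that the squarefreeness argument really does restrict attention to $\{0,1\}^n$. Once that is established, summing over all $W$ with $|W|=j$ yields Hochster's formula for $S/I_\Delta$, and then the shift $\beta_{i,j}(I_\Delta)=\beta_{i+1,j}(S/I_\Delta)$ converts the homological index, turning $|W|-(i+1)-1=j-i-2$ and producing the displayed formula in the statement.
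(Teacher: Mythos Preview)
Your proof sketch is the standard Koszul-complex derivation of Hochster's formula and is correct in outline. However, there is nothing to compare it against: the paper does not supply a proof of this theorem. It is stated as a classical result attributed to Hochster and is simply quoted as background in the Preliminaries, with the paper immediately moving on to reformulate Question~1.1 in topological terms. So while your argument is fine as a self-contained justification, the paper treats this statement as a known black box rather than something to be proved.
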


From Hochster's formula, if we denote $r=j-i-2$ then we obtain the following equivalent topological version of Question \ref{q:Aldo}.

\begin{question}\label{q2:Aldo}
Let $r\geq0$ and $\Delta$ is a flag complex. Assume that $\beta_r(\Delta[W])\neq 0$ and $\beta_r(\Delta[W'])= 0$ for any $W'$ with $|W'|=|W|-1$. Does it follow that $\beta_r(\Delta[W"])= 0$ for any $W"$ with $|W"|<|W|$?
\end{question}

Eagon and Reiner \cite{eagon} introduced a variant of Hochster's formula that uses Alexander duality.
\begin{theorem}[\cite{eagon}]\label{alex}
Let $\Delta$ be a simplicial complex on $[n]$. Then $$\beta_{i,j}(I_\Delta)=\sum_{F\in \Delta^{\vee},~|F|=n-j}\beta_{i-1}(\link_{\Delta^{\vee}}F)$$ for all $i\geq0$ and $j\geq i+1$.
\end{theorem}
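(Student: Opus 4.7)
The plan is to deduce this formula from Hochster's formula, just stated, combined with combinatorial Alexander duality. Hochster expresses $\beta_{i,j}(I_\Delta)$ as a sum over $W\subset[n]$ with $|W|=j$ of $\beta_{j-i-2}(\Delta[W])$. To reach the form of Theorem \ref{alex}, I will reindex the sum by the complementary set $F=[n]\setminus W$ (of size $n-j$) and replace each summand $\beta_{j-i-2}(\Delta[W])$ by $\beta_{i-1}(\link_{\Delta^\vee}F)$.

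The first step is to identify the link $\link_{\Delta^\vee}F$ with the Alexander dual of $\Delta[W]$ taken in its natural vertex set $W$. For $F'\subseteq W$,
\[
F'\in\link_{\Delta^\vee}F \iff F\cup F'\in\Delta^\vee \iff [n]\setminus(F\cup F')=W\setminus F'\notin\Delta.
\]
Since $W\setminus F'\subseteq W$, the last condition is equivalent to $W\setminus F'\notin\Delta[W]$, which is the defining condition for $F'$ to lie in the Alexander dual $(\Delta[W])^\vee$ (computed inside $W$). This step is just unpacking definitions.

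The second step is combinatorial Alexander duality: for any simplicial complex $X$ on $m$ vertices and any field $K$,
\[
\widetilde{H}_{r}(X;K)\cong\widetilde{H}_{m-r-3}(X^{\vee};K).
\]
Taking $X=\Delta[W]$, $m=j$, and $r=j-i-2$ gives $m-r-3=i-1$, so by the first step
\[
\beta_{j-i-2}(\Delta[W])=\beta_{i-1}\bigl((\Delta[W])^{\vee}\bigr)=\beta_{i-1}(\link_{\Delta^\vee}F).
\]
Substituting into Hochster's formula and relabeling the index set from $\{W\subseteq[n]:|W|=j\}$ to $\{F\subseteq[n]:|F|=n-j\}$ via $F=[n]\setminus W$ yields the stated identity.

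The main obstacle is the Alexander duality isomorphism itself, which I would treat as a standard black box. If a self-contained proof is required, the cleanest route is the chain-level identification that pairs each face $G\in X$ with the face $[m]\setminus G\in X^{\vee}$ and verifies that the simplicial boundary on $X^{\vee}$ matches, up to sign and a degree shift of $m-3$, the simplicial coboundary on $X$; alternatively one can invoke classical Alexander duality for the pair $(S^{m-2},|X|)$ inside the boundary of the $(m-1)$-simplex.
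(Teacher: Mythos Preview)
The paper does not supply its own proof of Theorem~\ref{alex}; it is quoted as a preliminary result from Eagon--Reiner~\cite{eagon}, so there is nothing in the paper to compare your argument against.

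On its own merits, your derivation is the standard one and is correct: the identification $\link_{\Delta^\vee}F=(\Delta[W])^\vee$ (dual taken inside $W=[n]\setminus F$) is exactly as you wrote, and combinatorial Alexander duality with $m=j$, $r=j-i-2$ gives the index $i-1$ on the nose. The one point you skate over is the change of index set: Hochster sums over all $W$ of size $j$, whereas the target formula sums only over $F\in\Delta^\vee$. This is harmless, but you should say why: if $F\notin\Delta^\vee$ then $W=[n]\setminus F\in\Delta$, so $\Delta[W]$ is the full simplex on $W$ and $\beta_{j-i-2}(\Delta[W])=0$ (equivalently, $\link_{\Delta^\vee}F$ is the void complex). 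Hence those terms drop out and the restriction to $F\in\Delta^\vee$ is legitimate.
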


By Theorem \ref{alex}, we obtain the following equivalent version of Question \ref{q2:Aldo} using the links of faces of the Alexander dual.

\begin{question}
Let $r\geq1$ and $\Delta$ is a flag complex. Assume that $\beta_r(\link_{\Delta^{\vee}}F)\neq 0$ and $\beta_{r-1}(\link_{\Delta^{\vee}}F')= 0$ for any $F'$ with $|F'|=|F|+1$. Does it follow that $\beta_{r-k}(\link_{\Delta^{\vee}}F")= 0$ for any $F"$ with $|F"|=|F|+k$ and $k\geq 1$?
\end{question}

If $\Delta_1$ and $\Delta_2$ are two subcomplexes of $\Delta$ such that $\Delta=\Delta_1\cup \Delta_2$, then there is a long exact sequence of reduced homologies (with $K$-coefficients), called the \emph{Mayer-Vietoris sequence}
$$\cdots \rightarrow \widetilde{H}_i(\Delta_1\cap\Delta_2)\rightarrow \widetilde{H}_i(\Delta_1)\oplus \widetilde{H}_i(\Delta_2)\rightarrow \widetilde{H}_i(\Delta)\rightarrow \widetilde{H}_{i-1}(\Delta_1\cap\Delta_2)\rightarrow\cdots$$

Using the Mayer-Vietoris sequence, Fern{\'a}ndez-Ramos and Gimenez proved the following, from which the case $b=1$ in Theorem~\ref{thm:II} readily follows.
\begin{lemma}(\cite[Theorem 2.1]{Oscar})\label{lem:corner}
For an edge ideal $I=I(G)$, over any field, if $\beta_{i,j}(S/I)=0=\beta_{i,j+1}(S/I)$ then $\beta_{i+1,j+2}(S/I)=0$.
\end{lemma}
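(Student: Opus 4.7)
The plan is to translate the statement, via Hochster's formula, into a statement about reduced simplicial homology of induced subcomplexes of the independence complex, and then prove it by a Mayer--Vietoris argument on a two--piece cover coming from a single edge of $G$.

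Write $I=I(G)=I_\Delta$ with $\Delta$ the (flag) independence complex of $G$ on $[n]$, and set $r=j-i$. Using $\beta_{p+1,q}(S/I)=\beta_{p,q}(I)$ together with Hochster's formula, the two hypotheses translate into $\widetilde{H}_{r-1}(\Delta[U];K)=0$ for every $U\subseteq[n]$ with $|U|=j$, and $\widetilde{H}_{r}(\Delta[U];K)=0$ for every $U\subseteq[n]$ with $|U|=j+1$; while the desired conclusion becomes $\widetilde{H}_r(\Delta[W];K)=0$ for every $W\subseteq[n]$ with $|W|=j+2$. Fix such a $W$.

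If $G$ has no edge with both endpoints in $W$, then $\Delta[W]$ is the full simplex on $W$, hence contractible, and there is nothing to show. Otherwise pick an edge $\{u,v\}\in E(G)$ with $u,v\in W$. The key observation is that, since $\Delta$ is the independence complex of $G$, no face of $\Delta[W]$ contains both $u$ and $v$; therefore
\[\Delta[W]=\Delta[W\setminus\{v\}]\cup\Delta[W\setminus\{u\}],\]
and this cover has intersection $\Delta[W\setminus\{u,v\}]$. The two pieces each have $j+1$ vertices and the intersection has $j$ vertices, so both hypotheses apply directly. Feeding the cover into the Mayer--Vietoris sequence yields the exact segment
\[\widetilde{H}_r(\Delta[W\setminus\{v\}])\oplus\widetilde{H}_r(\Delta[W\setminus\{u\}])\to\widetilde{H}_r(\Delta[W])\to\widetilde{H}_{r-1}(\Delta[W\setminus\{u,v\}]),\]
in which the outer groups vanish by hypothesis, forcing $\widetilde{H}_r(\Delta[W];K)=0$, as required.

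The only step that is not pure bookkeeping is choosing the decomposition: the crucial idea is that any edge of $G$ sitting inside $W$ automatically yields a two--piece cover of the induced independence subcomplex $\Delta[W]$ whose two parts and intersection have exactly the sizes for which the two vanishing hypotheses are available. After that, Mayer--Vietoris does all the work, uniformly over any field $K$, and no case analysis on $r$ is needed (the edge--less case already handles any degeneracy).
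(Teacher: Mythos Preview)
Your proof is correct and follows essentially the same route indicated in the paper: the paper does not give its own proof of this lemma but attributes it to Fern\'andez-Ramos and Gimenez, explicitly noting it is proved via the Mayer--Vietoris sequence, which is exactly the decomposition $\Delta[W]=\Delta[W\setminus\{u\}]\cup\Delta[W\setminus\{v\}]$ for an edge $\{u,v\}$ of $G$ inside $W$ that you use.
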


\section{Strand connectivity}\label{sec:Proofs}
We begin with the following remark.
\begin{remark}\label{rem}
Without the assumption on generation in degree 2 in Question \ref{q:Aldo}, easier examples show that the answer in ``No'' for any $j>2$. Just take the Stanley-Reisner ideal of the join of the boundary of a $(j-1)$-simplex with the barycentric subdivision of this boundary complex. For $j=3$ this is the join of a $3$-cycle and a $6$-cycle.
\end{remark}
\begin{proof}
The only subsets of vertices on which the induced complex has nonzero $(j-2)$-homology are those of each of the two components of the join.
\end{proof}

Next, we give a counterexample to Question \ref{q2:Aldo} for all $r\geq2$, thus also to Question \ref{q:Aldo} for all $j\geq 4$.

\begin{counterexample}\label{ex}
Fix $i\ge 2$.
Let $S$ be a flag sphere of dimension $i$ that contains a subset $A$ of vertices of size $|A|=2i+4$. Denote by $G$ the 1-skeleton of $S$. There exists such $S$ and $A$ with $d(a,b)\geq3$ for all $a,b\in A$, where $d(\cdot,\cdot)$ is the graph metric in $G$.
Let $O$ be an octahedral sphere of dimension $i+1$ on the vertices of $A$. Let $a$ and $b$ be opposite vertices in $O$ and $\Delta=S\cup O$. Then $\beta_i(\Delta)$ and $\beta_i(\Delta[A-\{a,b\}])$ are both non-zero but $\beta_i(\Delta-x)=0$ for all $x\in \Delta_0$.
\end{counterexample}

\begin{proof}
Since $d(a,b)\geq3$ for all $a,b\in A$, it follows that $\Delta$ is a flag complex. The complex $S\cap O$ is the complex of isolated vertices $A$ (denote it also by $A$, abusing notation). By the construction, we have $\widetilde{H}_i(A)=\widetilde{H}_i(O)=\widetilde{H}_{i-1}(A)=0$ (note that $i-1>0$), and $\widetilde{H}_i(S)\neq 0$. Using Mayer-Vietoris sequence $$\cdots \rightarrow \widetilde{H}_i(A)\rightarrow \widetilde{H}_i(S)\oplus \widetilde{H}_i(O)\rightarrow \widetilde{H}_i(\Delta)\rightarrow \widetilde{H}_{i-1}(A)\rightarrow\cdots$$
we get $\widetilde{H}_i(\Delta)\neq0$.

Since $\Delta[A-\{a,b\}]$ is an octahedral sphere of dimension $i$, it follows that $\widetilde{H}_i(\Delta[A-\{a,b\}])\neq0$. Consider a vertex $x\in \Delta_0$.

If $x\in \Delta_0-A$ then $\Delta-x=O\cup(S-x)$ and $O\cap(S-x)=A$. Using the fact that $\widetilde{H}_i(S-x)=0$ and the Mayer-Vietoris sequence $$\cdots \rightarrow \widetilde{H}_i(A)\rightarrow \widetilde{H}_i(S-x)\oplus \widetilde{H}_i(O)\rightarrow \widetilde{H}_i(\Delta-x)\rightarrow \widetilde{H}_{i-1}(A)\rightarrow\cdots$$ we get $\beta_i(\Delta-x)=0$.

If $x\in A$ then $\Delta-x=(S-x)\cup(O-x)$ and $(S-x)\cap(O-x)=A-x$. Note that $O-x$ is an $(i+1)$-ball and so $\widetilde{H}_i(O-x)=0$. Again, the Mayer-Vietoris sequence implies $\beta_i(\Delta-x)=0$.
\end{proof}

In the following theorem, we prove Question \ref{q2:Aldo} for $r=1$, thus complete the proof of Theorem~\ref{thm:I}.
\begin{theorem}\label{strand2}
Let $\Delta$ be a flag simplicial complex. Assume that $\beta_1(\Delta[W])\neq 0$ and $\beta_1(\Delta[W'])= 0$ for any $W'$ with $|W'|=|W|-1$. Then $\beta_1(\Delta[W"])= 0$ for any $W"$ with $|W"|<|W|$.
\end{theorem}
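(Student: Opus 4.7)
My plan is to argue by contradiction, using the Mayer--Vietoris long exact sequence for attaching one vertex to an induced subcomplex. Suppose for contradiction that $\beta_1(\Delta[W''])\neq 0$ for some $|W''|<|W|$, and pick such a $W''$ with $|W''|$ as large as possible. The hypothesis rules out $|W''|=|W|-1$, so $|W''|\leq|W|-2$; maximality of $|W''|$ together with the hypothesis at size $|W|-1$ then force $\beta_1(\Delta[W''\cup\{u\}])=0$ for every $u\notin W''$, since any such set has size at most $|W|-1$.

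Next, I apply Mayer--Vietoris to $\Delta[W''\cup\{u\}]=\Delta[W'']\cup\mathrm{star}(u)$, whose intersection is the flag link $L_u:=\Delta[N(u)\cap W'']$ and whose star is contractible. The vanishing of $\beta_1(\Delta[W''\cup\{u\}])$ forces surjectivity of $H_1(L_u)\to H_1(\Delta[W''])$ and injectivity of $\widetilde{H}_0(L_u)\to\widetilde{H}_0(\Delta[W''])$. Since any flag complex on at most three vertices has $\beta_1=0$, the surjection forces $|N(u)\cap W''|\geq 4$ and $\beta_1(L_u)\geq 1$ for every $u\notin W''$, a very restrictive condition on how $W''$ sits inside $\Delta$.

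In parallel I identify $\Delta[W]$ precisely: the hypothesis applied to $W-v$ for $v\in W$ says $\Delta[W]$ has $\beta_1\neq 0$ yet removing any vertex kills $\beta_1$. A short structural lemma identifies any such flag complex as the chordless cycle of length $|W|$: connectedness and minimum degree two are immediate, and a shortest homologically nontrivial 1-cycle $\gamma$ in the 1-skeleton must be chordless (else a chord splits it into two strictly shorter cycles, one of which is again nontrivial, contradicting shortness) and must span every vertex (else we could delete a vertex off $\gamma$ while preserving $\beta_1\neq 0$). In particular $W''\not\subseteq W$, since any proper induced subcomplex of a chordless cycle is a disjoint union of paths with $\beta_1=0$.

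The remaining task, and the main obstacle, is to convert these constraints into an explicit set of size $|W|-1$ with nonzero $\beta_1$, contradicting the hypothesis. The route I would pursue is iterative enlargement: starting from $W''$, for $U$ the current set (with $\beta_1(\Delta[U])\neq 0$), seek $v\notin U$ with $\beta_1(\Delta[N(v)\cap U])=0$; then Mayer--Vietoris yields the injection $H_1(\Delta[U])\hookrightarrow H_1(\Delta[U\cup\{v\}])$ and preserves nonvanishing of $\beta_1$, a sufficient condition being $|N(v)\cap U|\leq 3$. The chordless-cycle structure of $W$ should supply such candidates, since each $w\in W$ has only two neighbors inside $W$, providing many vertices with small intersection $|N(w)\cap U|$. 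Making this combinatorial bookkeeping precise along the whole chain up to size $|W|-1$, and in particular ruling out that every vertex outside $U$ continues to satisfy $|N(v)\cap U|\geq 4$ as $U$ grows, is the hardest step and the heart of the argument.
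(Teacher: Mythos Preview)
Your setup through the third paragraph is correct and matches the paper: the identification of $\Delta[W]$ as an induced cycle is exactly the paper's Step~1, and your Mayer--Vietoris computation showing that every $u\notin W''$ has $\beta_1(L_u)\geq 1$ (hence $|N(u)\cap W''|\geq 4$) is valid.

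The gap is in your final step, and it is not merely ``bookkeeping left to the reader'': the iterative enlargement strategy is self-defeating. You chose $W''$ of maximal size with $\beta_1\neq 0$, and then proved that for \emph{every} $v\notin W''$ the map $H_1(L_v)\to H_1(\Delta[W''])$ is surjective, equivalently that $H_1(\Delta[W''])\to H_1(\Delta[W''\cup\{v\}])$ is the zero map. So at the very first step $U=W''$ there is no vertex $v$ with $\beta_1(\Delta[N(v)\cap U])=0$; you have already shown that none exists. Your heuristic that vertices $w\in W$ should have small $|N(w)\cap W''|$ because they have only two neighbours in $W$ does not help: $W''$ is not contained in $W$, and nothing prevents $w$ from having many neighbours in $W''\setminus W$ --- indeed your own constraint forces at least four.

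The paper resolves this by going in the opposite direction: rather than enlarging $W''$, it builds the contradicting set of size $|W|{-}1$ mostly out of $W$. One first reduces (as you did for $W$) to the case that $\Delta[W'']$ is itself an induced cycle, then picks two non-adjacent vertices $a,b\in W''\setminus W$ and shows, by testing many sets $W'$ with $W''\subseteq W'\subseteq W\cup W''$ and $|W'|=|W|-1$, that each of $a,b$ has at least six neighbours in $W$. Finally one removes three well-chosen vertices $x,y,z$ from the cycle $W$ (an adjacent pair and a vertex far from it) and sets $W'=(W\setminus\{x,y,z\})\cup\{a,b\}$; the six-neighbour condition guarantees that $a$ and $b$ each connect to both arcs of $W\setminus\{x,y,z\}$, producing a nontrivial $1$-cycle in $\Delta[W']$ and the desired contradiction. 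The key idea you are missing is this explicit hybrid construction from pieces of both $W$ and $W''$, rather than an abstract enlargement of one of them.
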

\begin{proof}
Step 1: We claim $\Delta[W]$ is an induced cycle.

Proof: as $\beta_1(\Delta[W])\neq 0$, there is a nontrivial cycle $C \subseteq \Delta[W]$. A chord $e$ in $C$ splits $C$ into two cycles with intersection $e$; at least one of these two cycles is nontrivial, else $C$ would be trivial. Thus, we may assume $C$ is an induced cycle, i.e. chordless. It is left to show that $V(C)=W$. If not, then there is a subset $W'$ of size $|W'|=|W|-1$ s.t. $V(C)\subseteq W'\subseteq W$, so $C$ is nontrivial in $\Delta[W']$, a contradiction to $\beta_1(\Delta[W'])=0$. Denote $C:=\Delta[W]$.

Step 2: Assume by contradiction there is $W"$ with  $|W"|<|W|-1$ and $\beta_1(\Delta[W"])\neq 0$. We may assume $C":=\Delta[W"]$ is an induced cycle (see step 1); and $|W"|\geq 4$ as $\Delta$ is flag.

Look on the vertices of $C"$ in cyclic order.
We claim that between any two vertices in $W\cap W"$ there is a vertex in $W"-W$. In particular (using $|W"|\geq 4$), there are two vertices $a,b\in W"-W$ which are non-neighbors in $C"$.
In case $W\cap W"$ is nonempty, we can choose $a,b$ so that they have a common neighbor in $W\cap W"$; and so we choose.

Proof:
let $x,y\in W\cap W"$ be consecutive in the cyclic order induced on $W\cap W"$ from $C"$, and assume by contradiction that $xy$ is an edge in $C"$. As $C$ is induced, $xy$ must be an edge of $C$ as well.
This leads to a contradiction, as follows: in $W\cup W"$, remove vertices from $W-W"$ to obtain a subset $W'$ of size $|W|-1$. By assumption, $\beta_1(\Delta[W'])=0$, thus the cycle $C"$ is trivial in $\Delta[W']$; in particular there is a triangle $xyz\in \Delta[W']$. However, $z\notin W"$ and $z\notin W$ as both $C$ and $C"$ are induced cycles of length $\geq 4$, a contradiction.

Step 3:
as $|W|\geq 6$, there exist $x,y,z\in C=\Delta[W]$ such that $xy$ is an edge and $z$ not a neighbor of either of $x,y$. Thus, $C-\{x,y,z\}$ is the union of two disjoint nonempty paths, denoted $P$ and $Q$. As $W':=P\cup Q\cup \{a,b\}$
(see Step 2 for who $a,b$ are)
has size $|W|-1$, by assumption $\beta_1(\Delta[W'])=0$.

Note that in the suspension of $P\cup Q$ by $a,b$ any cycle of the form $(a,P',b,Q',a)$, where $P'\subseteq P$ and $Q'\subseteq Q$ are nonempty subpaths, is nontrivial. Thus, the following claim finishes the proof:

(**) There exists a choice of $x,y,z\in C$ as above such that there are vertices $p_a,p_b\in P$ (they may be equal) and
$q_a,q_b\in Q$ such that the four edges $ap_a,aq_a,bp_b,bq_b\in \Delta$ exist.

Step 4: to prove (**), we first claim that each of $a$ and $b$ has at least 6 neighbors in $W$.

Proof: for any $W'$ of size $|W'|=|W|-1$ where $W"\subseteq W'\subseteq W"\cup W$, the induced cycle $C"$ is trivial in $\Delta[W']$, thus each of its edges belong to some triangle, so each of $a,b$ has a neighbor in $W'-W"$.

Recall $|W"|\geq 4$.
If $W\cap W"=\emptyset$, we throw at least 5 vertices of $W-W"$ ($=W$) from $W\cup W"$ to obtain $W'$,
so each of $a,b$ has at least 6 neighbors in $W-W"$;
if $|W\cap W"|=1$ we throw at least 4 vertices of $W-W"$, so each of $a,b$ has at least 5 neighbors in $W-W"$ and another neighbor is in $W\cap W"$.
Thus assume $|W\cap W"|>1$.
The only case when we throw less than 4 vertices of $W-W"$ is when $a,b$ are the only vertices of $W"-W$, in which case we throw 3 vertices of $W-W"$ and both $a$ and $b$ have 2 neighbors in $W\cap W"$ (and $C"$ is a $4$-cycle); so again each of $a$ and $b$ has at least 6 neighbors in $W$, as claimed.

Step 5, proof of (**): fix an orientation on $C$.
Denote some 6 neighbors of $a$ as guaranteed in Step 4 by $(a_1,a_2,a_3,a_4,a_5,a_6)$ in cyclic order on $C$, and similarly $(b_1,b_2,b_3,b_4,b_5,b_6)$ for $b$ (possibly some $a_i=b_j$). For a vertex $v\in C$ denote by $v'$ the vertex right after it on $C$ in the cyclic order.
Tentatively, let $z=a_2, x=a_5, y=a_5'$; so $a$ has a neighbor both in $P$ and $Q$. If $b$ does not have a neighbor in both $P$ and $Q$, one of the following 2 cases must occur:


Case A: there are (at least) 3 $b_j$'s in the segment $P$, $a_2(P)a_5$; by relabeling $(b_1,\ldots,b_6)$ by a cyclic permutation we may assume they are $b_1,b_2,b_3$.
Change the tentative choice to $z=b_2, x=a_6, y=a_6'$.
Then $a_2,b_1$ are in (say) $P$ and $b_3,a_5$ are in $Q$.

Case B: there are (at least) 3 $b_j$'s in the segment $Q$, $a_5'(Q)a_2$; by relabeling by a cyclic permutation we may assume they are $b_1,b_2,b_3$.
Change the tentative choice to $z=b_2, x=a_3, y=a_3'$.
Then $a_2,b_3$ are in (say) $P$ and $b_1,a_5$ are in $Q$.

This finishes the proof of (**), and the proof of the theorem.
\end{proof}

Counterexample~\ref{ex} naturally rises the following general question:
\begin{problem}
For any $j\geq 4$, relate the connectedness of the $j$-strand of an edge ideal to classical graph theoretic properties of the corresponding graph (or its complement).
\end{problem}

\section{Application to the subadditivity problem}\label{sec:App}
In this section we prove Theorem~\ref{thm:II}.
First we treat the cases in Theorem~\ref{thm:II} where $b=2,3$ and $t_b\in\{2b,2b-1\}$, next we treat the remaining case $t_3=4$.
Let $I=I(G)$ be the edge ideal of a graph $G$. By considering Taylor's resolution of $S/I$, we observe that $t_i\leq 2i$. On the other hand, since the resolution is minimal, we have $t_i\geq i+1$ when $t_i\neq0$. It follows that if $t_i\neq 0$ then
$i+1\leq t_i \leq 2i$ for all $i\geq1$.
\begin{proposition}\label{a=2}
Over any field, if $I=I(G)$ is the edge ideal of a graph $G$, $b\in\{2,3\}$ and $t_b\in\{2b,2b-1\}$, then for all $a\geq0$,
$$t_{a+b}\leq t_a+t_b.$$
\end{proposition}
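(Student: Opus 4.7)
The plan is to split according to the value of $t_b$. The case $t_b = 2b$ is immediate from the already-known subadditivity for $b = 1$: iterating $t_{i+1} \leq t_i + 2$ a total of $b$ times yields
\[ t_{a+b} \;\leq\; t_{a+b-1} + 2 \;\leq\; \cdots \;\leq\; t_a + 2b \;=\; t_a + t_b. \]
For $t_b = 2b - 1$ the same iterated bound only gives $t_{a+b} \leq t_a + 2b$, which falls short by one, so the real task is to exclude the equality $t_{a+b} = t_a + 2b$.

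I would argue by contradiction. Supposing $t_{a+b} = t_a + 2b$, tightness in each step of the iterated chain forces $t_{a+k} = t_a + 2k$ for every $0 \leq k \leq b$. Via Hochster's formula, $\beta_{a+b,\, t_a+2b}(S/I) \neq 0$ produces $W \subseteq [n]$ of size $t_a + 2b$ with $\widetilde H_{(t_a - a) + b - 1}(\Delta[W]; K) \neq 0$, while $t_b = 2b - 1$ means that $\widetilde H_{b-1}(\Delta[V]; K) = 0$ for every $V$ of size $2b$. The contradiction I aim to derive is the existence of some such $V$ with $\widetilde H_{b-1}(\Delta[V]; K) \neq 0$.

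To produce such a $V$ I would descend from $\Delta[W]$ toward a $2b$-vertex induced subcomplex by alternating two Mayer-Vietoris moves: either delete a vertex from the current subcomplex (keeping the homological degree in play unchanged) or pass to the link of a vertex (dropping both the vertex count and the homological degree by one); in the flag setting both types of move keep us inside the class of induced subcomplexes of $\Delta$. At each deletion the vertex must be chosen so that non-vanishing of the relevant reduced homology is preserved, and this is exactly what strand connectivity supplies: for $b = 2$ only the easy connectivity of the $2$-strand of $I$ (the linear strand of $S/I$) is needed, while for $b = 3$ the descent must cross the $3$-strand of $I$, where Theorem~\ref{strand2} becomes essential.

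The main obstacle is scheduling: one must arrange the sequence of deletion/link moves so that the descent terminates \emph{precisely} at a $2b$-vertex subcomplex still carrying $\widetilde H_{b-1} \neq 0$, which is exactly what the hypothesis $t_b = 2b - 1$ forbids. The extremality relations $t_{a+k+1} = t_{a+k} + 2$ supply the structural slack that makes this feasible, but converting that slack into a concrete sequence of Mayer-Vietoris moves---and invoking strand connectivity at the critical step where a vertex deletion threatens to kill the homology---is the technical crux of the argument.
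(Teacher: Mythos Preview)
Your treatment of the case $t_b=2b$ matches the paper's exactly. For the case $t_b=2b-1$, however, your plan diverges from the paper's and contains genuine gaps.

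\textbf{What the paper actually does.} The paper does \emph{not} use strand connectivity (Theorem~\ref{strand2}) anywhere in this proposition; that result is invoked only later, in the separate case $t_3=4$. Instead, the paper performs a \emph{single} Mayer--Vietoris step. Taking $W$ with $|W|=t_{a+b}$ and $\widetilde H_j(\Delta[W])\neq 0$ for $j=t_{a+b}-(a+b)-1$, one first observes that $G[W]$ cannot be a matching (else Taylor's resolution would force $t_b=2b$), so there is a vertex $v$ with $\deg_{G[W]}(v)\ge 2$. Writing $N=\Delta[W]$ and choosing two $G$-neighbours $x_1,x_2$ of $v$, one has $N=(N-v)\cup(N-\{x_1,x_2\})$. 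Mayer--Vietoris then gives either $\widetilde H_j(N-v)\neq 0$, or $\widetilde H_j(N-\{x_1,x_2\})\neq 0$, or $\widetilde H_{j-1}(N-\{v,x_1,x_2\})\neq 0$, yielding respectively $t_{a+b}\le t_{a+b-1}+1$, $t_{a+b}\le t_{a+b-2}+2$, or $t_{a+b}\le t_{a+b-2}+3$. Each of these, combined with the already-known $b=1$ case (and, for $b=3$, the just-established $b=2$ case), finishes the proof. No iterated descent, no strand connectivity.

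\textbf{Where your plan goes wrong.} Your claim that ``passing to the link of a vertex drops both the vertex count and the homological degree by one'' is false for flag complexes: in $\Delta[W]$ the link of $v$ is $\Delta[W\setminus N_{G}[v]]$, so it removes $1+\deg_{G[W]}(v)$ vertices, not one. This destroys the arithmetic you need to land \emph{exactly} at a $2b$-vertex subcomplex with $\widetilde H_{b-1}\neq 0$; overshooting to some $|V|<2b$ with $\widetilde H_{b-1}(\Delta[V])\neq 0$ does not contradict $t_b=2b-1$, since that only gives $\beta_{|V|-b,\,|V|}(S/I)\neq 0$ with $|V|-b<b$. Moreover, your invocation of strand connectivity is misplaced: strand connectivity concerns a \emph{fixed} homological degree $r$ with varying $|W|$, whereas your descent changes both simultaneously, and you never identify the second nonzero endpoint on a strand that would let you interpolate. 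You yourself flag the ``scheduling'' as the technical crux and leave it unresolved; in fact the paper's argument shows no such scheduling is needed.
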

\begin{proof}
Let $\Delta$ be the simplicial complex such that $I_{\Delta}=I$. If $t_b=2b$ then the known inequality $t_{a+1}\leq t_a +t_1$ implies
$$t_{a+b}\leq t_a+bt_1=t_a+2b=t_a+t_b.$$
Assume $t_b=2b-1$. Let $W\subseteq [n]$ so that $|W|=t_{a+b}$ and $\widetilde{H}_{j}(\Delta[W])\neq 0$ for $j=t_{a+b}-(a+b)-1$.
Denote $N=\Delta[W]$.

We claim that there is a vertex of degree $\geq 2$ in $H=G[W]$.
Else, $H$ would be a disjoint union of edges, i.e. a matching (there are no isolated vertices as $N$ is not acyclic, in particular not a cone).
The Taylor resolution shows that $H$ has at least $a+b$ edges.
Thus, $t_b=2b$, a contradiction.


Let $v$ be a vertex of $H$ of degree $\deg_H(v)\geq2$. Let $x_1,x_2$ be two neighbors of $v$ in $H$. Clearly, $N=(N-v)\cup(N-\{x_1,x_2\})$. Set $\Delta_1=N-v$ and $\Delta_2=N-\{x_1,x_2\}$. We may assume that $\widetilde{H}_{j}(\Delta_1)=0$. For otherwise, we obtain that $\beta_{a+b-1,t_{a+b}-1}(S/I)\neq0$ and so,
$t_{a+b}\leq t_{a+b-1}+1$; for $b=2$ this yields, using (*) for $b=1$,
\[t_{a+2}\leq t_a+t_1+1=t_a+3=t_a+t_2\] as desired, and for $b=3$, after we finish the proof for $b=2$ below, it yields
\[t_{a+3}\leq t_a+t_2+1\leq t_a+5=t_a+t_3.\]

 Using the Mayer-Vietoris sequence for $N=\Delta_1\cup \Delta_2$
 we have $\widetilde{H}_j(\Delta_2)\neq0$ or $\widetilde{H}_{j-1}(\Delta_1\cap\Delta_2)\neq0$.

 If $\widetilde{H}_j(\Delta_2)\neq0$, then $\beta_{a+b-2,t_{a+b}-2}(S/I)\neq0$ and so
 $t_{a+b}\leq t_{a+b-2}+2$.
For $b=2$ this gives $t_{a+2}\leq t_a+2<t_a +t_2$ as desired, and for $b=3$ it gives
$t_{a+3}\leq t_{a+1}+2\leq t_a+t_1+2=t_a +4<t_a+t_3$ as desired.

Finely, if $\widetilde{H}_{j-1}(\Delta_1\cap\Delta_2)\neq0$, then $\beta_{a+b-2,t_{a+b}-3}(S/I)\neq0$, and similarly we obtain for $b=2,3$,
$$t_{a+b}\leq t_{a+b-2}+3 \leq t_a+t_b.$$
This completes the proof.
\end{proof}
Next we prove the remaining case in Theorem~\ref{thm:II}:
\begin{proposition}\label{a=3}
Over any field, if $I=I(G)$ is the edge ideal of a graph $G$ and $t_3=4$, then for all $a\geq0$,
$$t_{a+3}\leq t_a+t_3.$$
\end{proposition}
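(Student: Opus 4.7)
The plan is to use Theorem \ref{thm:I} (connectivity of the $3$-strand of $I$) together with $\beta_{2,5}(I)=0$ (which follows from $t_3=4$) and $\beta_{0,3}(I)=0$ (since $I$ is generated in degree $2$) to split into two structural cases, based on where the nonzero entries of the $3$-strand of $I$ lie. Connectivity forces them into a single interval, and the vanishing at $i=0,2$ forces that interval to be contained either in $\{1\}$ (Case A: $\beta_{k,k+2}(S/I)=0$ for all $k\geq 3$) or in $\{i\geq 3\}$ (Case B: $\beta_{1,4}(I)=\beta_{2,4}(S/I)=0$, so $t_2\leq 3$, hence $t_2=3$ since $t_3\neq 0$).

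In Case A, the plan is to prove inductively that $t_k\leq k+1$ for all $k\geq 3$. The base case $k=3$ is the hypothesis $t_3=4$. The inductive step combines Lemma \ref{lem:corner}, which turns $\beta_{k-1,j}(S/I)=0$ for $j\geq k+1$ into $\beta_{k,j}(S/I)=0$ for $j\geq k+3$, with the diagonal vanishing $\beta_{k,k+2}(S/I)=0$ given by Case A, yielding $\beta_{k,j}(S/I)=0$ for $j\geq k+2$, i.e.\ $t_k\leq k+1$. Consequently $t_{a+3}\leq a+4\leq t_a+4$, using $t_a\geq a+1$ whenever $t_a\neq 0$ (and $t_a=0$ forces $t_{a+3}=0$).

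In Case B, the plan is to take a witness $W$ with $|W|=s:=t_{a+3}$ and $\widetilde{H}_r(\Delta[W])\neq 0$ for $r=s-a-4$, and then apply the Mayer-Vietoris template of Prop \ref{a=2}: find a vertex $v$ of degree $\geq 2$ in $H:=G[W]$ (existence by the Taylor argument of Prop \ref{a=2}), choose two neighbors $x_1,x_2$, and decompose $\Delta[W]=(\Delta[W]-v)\cup(\Delta[W]-\{x_1,x_2\})$. The first two subcases give $t_{a+3}\leq t_{a+2}+1\leq t_a+t_2+1=t_a+4$ (invoking Prop \ref{a=2} for $b=2$) and $t_{a+3}\leq t_{a+1}+2\leq t_a+4$, respectively. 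The third subcase yields $\beta_{a+1,s-3}(S/I)\neq 0$; from here, the plan is to iterate Mayer-Vietoris on the intersection complex $\Delta[W-\{v,x_1,x_2\}]$ and exploit the connectivity of the $2$- and $3$-strands of $I$ (Theorem \ref{thm:I}) to align the resulting nonzero Betti entries along a single strand, ultimately forcing $\beta_{a,s-4}(S/I)\neq 0$ and hence $t_a\geq s-4$.

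The main obstacle is the intersection subcase of Case B, where converting $\beta_{a+1,s-3}(S/I)\neq 0$ into $\beta_{a,s-4}(S/I)\neq 0$ is immediate via strand connectivity only when the relevant strand of $I$ has index $\leq 3$; for larger $r$ one must rely on the bound $t_{a+3}\leq 2a+4$ (which follows in Case B from iterating Lemma \ref{lem:corner} off the row $\beta_{2,j}(S/I)=0$ for $j\geq 4$) to carry out further Mayer-Vietoris iteration and eliminate the excess degree.
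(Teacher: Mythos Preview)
Your Case A is correct and matches the paper's argument for the subcase $t_2=4$ (your phrasing via strand connectivity of the $3$-strand of $I$ is equivalent to the paper's, and the induction with Lemma~\ref{lem:corner} is the same).

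The gap is in Case B, in the intersection subcase. With a vertex $v$ of degree $\geq 2$ and two neighbors $x_1,x_2$, the intersection $\Delta[W\setminus\{v,x_1,x_2\}]$ only yields $\beta_{a+1,s-3}(S/I)\neq 0$, i.e.\ $t_{a+1}\geq s-3$. From this you can extract at best $t_a\geq t_{a+1}-t_1\geq s-5$, which gives $t_{a+3}\leq t_a+5$, off by one. Your proposed remedy---iterate Mayer--Vietoris on $W\setminus\{v,x_1,x_2\}$ and invoke strand connectivity---does not close the gap: the relevant strand index is $s-a-3$, which for general $a$ exceeds $3$, so Theorem~\ref{thm:I} does not apply; and a second Mayer--Vietoris round produces (in its own bad subcases) $\beta_{a-1,s-5}$ or $\beta_{a-1,s-6}$, which again do not force $t_a\geq s-4$. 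The auxiliary bound $t_{a+3}\leq 2a+4$ you derive is correct but does not rescue the iteration.

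The paper avoids this entirely by finding a vertex $v$ of degree $\geq 3$ in $H=G[W]$ (not merely $\geq 2$). Removing $v$ versus three neighbors $\{x_1,x_2,x_3\}$ makes $\Delta_2=N\setminus\{x_1,x_2,x_3\}$ contribute directly to $\beta_{a,s-3}$ and the intersection $N\setminus\{v,x_1,x_2,x_3\}$ to $\beta_{a,s-4}$, so every branch lands on $t_a$ with the right shift. The existence of such a vertex is the new ingredient: if every vertex of $H$ had degree $\leq 2$, then $H$ would be a disjoint union of paths and cycles; the hypothesis $t_2=3$ forbids two disjoint edges from being induced in $G$, forcing $H$ to be a single short path or a cycle $C_t$ with $t\leq 5$, hence with fewer than $6$ edges---contradicting (via the Taylor resolution) $\beta_{a+3,s}(S/I(H))\neq 0$ once one reduces to $a\geq 3$ using Proposition~\ref{a=2} for $b=1,2$.
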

\begin{proof}

We split the proof according to the possible values of $t_2$.

If $t_2=4$, then combined with $t_3=4$, Theorem~\ref{strand2} (connectivity of the $3$-strand of $I$, equivalently of the $2$-strand of $S/I$) says $\beta_{2+k,4+k}(S/I)=0$ for any $k>0$. Further, by Lemma~\ref{lem:corner} we conclude that $t_k\leq k+1$ for all $k\geq 3$. In particular, if $t_{a+3}\neq0$, then $t_{a+3}\leq a+4\leq t_a+t_3$ as desired.

Thus we may assume $t_2=3$. By Proposition~\ref{a=2} we may also assume $a\geq3$.
Let $\Delta$ be the simplicial complex such that $I_{\Delta}=I$ and let $W\subseteq [n]$ so that $|W|=t_{a+3}$ and $\widetilde{H}_{t_{a+3}-(a+3)-1}(\Delta[W])\neq0$.

First, we claim that there exists a vertex $v$ in $H=G[W]$ such that $\deg_H(v)\geq3$.

Assume on contrary, that $\deg_H(v)\leq2$ for all $v$ in $H$.
Then $H$ is the disjoint union of cycles and paths, and as $\Delta[W]$ is not acyclic it has no isolated vertices. Further, as $t_2<4$, $H$ has no two disjoint edges which form an induced subgraph. Thus, $H$ is either an induced $t$-cycle $C_t$, where $3\leq t\leq 5$, or a path with at most three edges.
On the other hand,
since $\beta_{a+3,t_{a+3}}(S/I(H))\neq0$ and $a\geq3$, it follows from the Taylor resolution that there are at least 6 edges in $H$, a contradiction.


Let $v\in W$ with $\deg_H(v)\geq3$. Let $x_1,x_2,x_3$ be three neighbors of $v$ in $H$. Clearly, for $N=\Delta[W]$,  $N=(N-v)\cup(N-\{x_1,x_2,x_3\})$. Set $\Delta_1=N-v$ and $\Delta_2=N-\{x_1,x_2,x_3\}$. We may assume that $\widetilde{H}_{j}(\Delta_1)\neq0$. For otherwise, we obtain that $\beta_{a+2,t_{a+3}-1}(S/I)\neq0$ and so $$t_{a+3}\leq t_{a+2}+1\leq t_a+t_2+1=t_a+4=t_a+t_3$$ as desired.
Using the Mayer-Vietoris sequence we have $\widetilde{H}_j(\Delta_2)\neq0$ or $\widetilde{H}_{j-1}(\Delta_1\cap\Delta_2)\neq0$. If $\widetilde{H}_j(\Delta_2)\neq0$, then $\beta_{a,t_{a+3}-3}(S/I)\neq0$, and so $$t_{a+3}\leq t_{a}+3 < t_a+t_3.$$
Finally, if $\widetilde{H}_{j-1}(\Delta_1\cap\Delta_2)\neq0$, then $\beta_{a,t_{a+3}-4}(S/I)\neq0$, and so $$t_{a+3}\leq t_a+4=t_a+t_3.$$
\end{proof}

\textbf{Acknowledgements.} The authors thank Aldo Conca for helpful comments and discussion.

\end{document}